\theoremstyle{plain}
\newtheorem{theorem}{Theorem}
\newtheorem{lemma}[theorem]{Lemma}
\newtheorem{proposition}[theorem]{Proposition}
\newtheorem*{proposition*}{Proposition}
\newtheorem*{theorem*}{Theorem}
\theoremstyle{definition}
\newtheorem*{definition*}{Definition}
\theoremstyle{remark}
\newtheorem*{claim*}{Claim}
\newtheorem*{remark*}{Remark}
\newcommand{\N}{\mathbb{N}}
\newcommand{\Z}{\mathbb{Z}}
\newcommand{\I}{\mathds{1}}
\newcommand{\dd}{{\mathrm d}}
\renewcommand{\ge}{\geqslant}
\renewcommand{\le}{\leqslant}
\renewcommand{\geq}{\geqslant}
\renewcommand{\leq}{\leqslant}
\newcommand{\abs}[1]{\left\lvert{#1}\right\rvert}
\begin{document}

\title{Stochastic Perturbations of Convex Billiards}
\author{Roberto Markarian, Leonardo T. Rolla, Vladas Sidoravicius, Fabio A. Tal, Maria E. Vares}

\maketitle

{
\vspace{-5mm}
\centering
\scriptsize
IMERL, Facultad de Ingenier\'{\i}a, Universidad de la Rep\'{u}blica; 
Instituto de Investigaciones Matem\'aticas Luis A. Santal\'o, Consejo Nacional de Investigaciones
Cient\i{\i}ficas y T\'ecnicas and Universidad de Buenos Aires; 
Instituto de Matem\'atica Pura e Aplicada; 
Intituto de Matem\'atica e Estat\'{\i}stica, Universidade de S\~ao Paulo; 
Instituto de Matem\'atica, Universidade Federal do Rio de Janeiro.
}

\begin{abstract}
We consider a strictly convex billiard table with $C^2$ boundary, with the dynamics subjected to random perturbations.
Each time the billiard ball hits the boundary its reflection angle has a random perturbation.
The perturbation distribution corresponds to a situation where either the scale of the surface irregularities is smaller than but comparable to the diameter of the reflected object, or the billiard ball is not perfectly rigid.
We prove that for a large class of such perturbations the resulting Markov chain is uniformly ergodic, although this is not true in general.
\vspace{2mm}

{\noindent\scriptsize
Keywords:
billiard systems, random perturbations, uniform ergodicity, invariant measure
}
\end{abstract}

This preprint has the same numbering of sections, figures and theorems as the the
published article
``\emph{Nonlinearity 28 (2015), 4425-4434.}''

\vspace{1mm}

\section{Introduction}

Billiards with a stochastic perturbation of the outgoing angle are very natural models motivated by microscopic kinetic problems, theoretical computer science, etc., and have been receiving increased attention from deterministic and stochastic dynamics communities in the last decade.
In most of the studied cases, the outgoing angle is either uniformly distributed, or it is chosen according to the Knudsen cosine law, see for instance~\cite{evans-01,feres-zhang-12,khanin-yarmola-13}.
These types of reflection laws are physically relevant for billiards where the micro-structure and irregularities of the boundaries have a typical length-scale larger than the diameter of the billiard ball.

In the present work we focus on the case of stochastic perturbations of the classical deterministic billiard inspired by a physical situation where either the billiard ball is not perfectly rigid, or the scale of the surface irregularities is smaller than but comparable to the diameter of the reflected object, see Figure~\ref{figure1}.

Deterministic billiards on sufficiently smooth strictly convex tables are non-ergodic~\cite{lazutkin-73,katok-hasselblatt-95}.
We show that, in contrast to the deterministic situation, for a certain class of physically relevant stochastic perturbations of a reflection law, the associated Markov process is uniformly ergodic, and that any probability measure is attracted exponentially fast to a unique invariant probability measure.
We note that this is not true in general: there are examples of stochastic perturbations under which the resulting system is not ergodic.
Our result holds for billiard tables which are strictly convex, with $C^2$ boundary, including the possibility of isolated points of null curvature.

The mathematical setup is informally described as follows.
Given a prescribed family of independent random variables $\{Y_{\theta} \}_{\theta \in [0, \pi]} $, the dynamics obeys a stochastic rule.
If the outgoing angle after a deterministic collision would have been $\theta$, it is taken as $\theta + Y_{\theta}$ instead.
The family $\{Y_{\theta} \}_{\theta \in [0, \pi]}$ is chosen in such a way that typically the influence of $Y_{\theta}$ is negligible compared to $\theta$.
However, it becomes substantial when the incidence angle gets too small.
The latter property reflects an increased sensitivity to surface rugosity. 

Stochastic perturbations of classical billiard systems have been proposed and studied before, see~\cite{comets-popov-schutz-vachkovskaia-09,feres-zhang-10,cook-feres-12,chumley-cook-feres-13,yarmola-13}.
Yet, we are not aware of prior studies of perturbations similar to those considered here.

This paper is divided as follows.
In Section~\ref{sec:models} we describe the model and state the main result. Basic properties of convex billiard tables are briefly reviewed in Section~\ref{sec:deterministic}. The main  result of this paper is Proposition~\ref{pr:principal}, which is stated in Section~\ref{sec:markov}, and from which Theorem~\ref{theo-rolandgarros} follows at once by classical results. Section~\ref{sec:proof} is dedicated to the main technical proofs, culminating with the proof of Proposition~\ref{pr:principal}.  A quantitative version of this proposition, stated as Theorem~\ref{thm:epsilonsquared},
is given in Section~\ref{sec:quantitative}.

\begin{figure}[t!]
\hfill
\includegraphics[height=28mm]{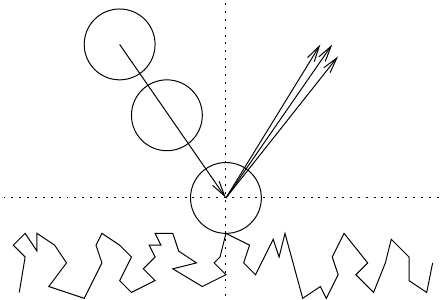}
\hfill
\includegraphics[height=28mm]{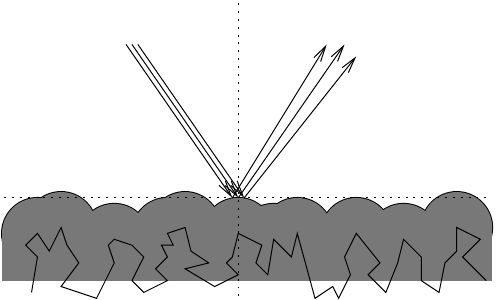}
\hfill
\includegraphics[height=28mm]{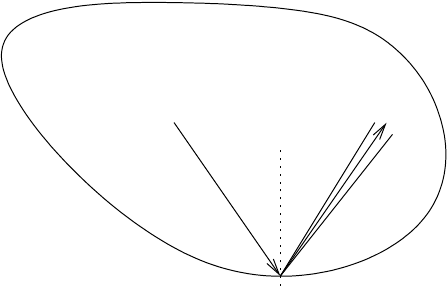}
\hfill{}
\caption{\small Perturbation on the reflexion angle motivated by microscopic roughness and ball radius.
From left to right: microscopic paradigm of round particle reflecting on a rough surface; equivalent microscopic model of point particle reflecting on a smooth surface; effective macroscopic model.}
\label{figure1}
\end{figure}

\section{Models and result}
\label{sec:models}

We begin with the description of the deterministic billiard in $D$, a connected domain in $\mathbb{R}^2$.
\emph{We assume throughout this paper that~$D$ is strictly convex with $C^2$ boundary.}
Notice that isolated points with null curvature are allowed.

The billiard in $D$ is the dynamical system describing the free motion of a point mass inside $D$ with elastic reflections at its boundary $\Gamma$.
Let $n(q)$ be the unit normal to the curve $\Gamma$ at the point $q$ pointing towards the interior of $D$.
The phase space of such a dynamical system is
\(
\{(q,v): \, q \in \Gamma, \, |v| =1, \langle v, n(q)\rangle \geq 0 \}.
\)

The image of a point $(q_0, v_0)$ by the deterministic billiard map $T$ is denoted by
\[
T(q_0, v_0) = (q_1, v_1)
\]
and defined as follows.
First, $q_1$ is the point where the oriented line through $(q_0, v_0)$ hits $\Gamma$.
Finally, $v_1$ is the velocity vector after the reflection at $q_1$, i.e. $v_1=v_0 - 2 \langle n(q_1), v_0\rangle n(q_1)$.

We take the set of coordinates $(s, \theta)$, where $s$ is the arc-length parameter along $\Gamma$ and $\theta \in [0, \pi]$ is the angle between $v$ and the oriented tangent line to the boundary at $q$.
The phase space under these coordinates is given by the cylinder
$$M=\{(s,\theta): 0 \leq s < |\Gamma |, 0 \leq \theta \leq \pi\}.$$
For $x = (s,\theta) \in M$, we write $s(x) = s, \theta(x) = \theta$, and also $q(x)$ for the corresponding point in~$\Gamma$.
The map $T$ is a diffeomorphism defined on the compact set $M$ with fixed points at $\partial M = \{(s,\theta):\theta =0 \text{ or } \pi\}$.

Moreover, $T$ is a {\em twist diffeomorphism}.
This means that the image of any vertical line ($s=$ constant) is a smooth curve with slope positive and bounded away from infinity, see~\cite[Section~9.2]{katok-hasselblatt-95}.

If $D$ is strictly convex with sufficiently smooth boundary, by KAM theory there exist invariant curves of the billiard map as close as we want to the boundary~$\partial M$, see the end of Section~\ref{sec:deterministic}.
Therefore, if the initial angle is small it remains small along the whole trajectory.
We show that this regularity can be broken using arbitrarily small random perturbations.

\medskip

We consider the system with random perturbations that act on the outgoing angle, independently of the position, by adding a random variable to~$\theta$.
Fix $0<\epsilon<\frac{\pi}{2}$.
When the incidence angle $\theta$ is away from $0$ and $\pi$, we take the outgoing angle uniformly distributed on the interval $[\theta-\epsilon,\theta+\epsilon]$.
Values of $\theta$ close to $0$ or $\pi$ need to be truncated, otherwise the ball would leave the billiard table.
Let $\theta^\epsilon : = \min \{\max (\theta, \epsilon), \pi - \epsilon\}$.
For every point $x=(s,\theta)\in M$, consider the measure $Q^{\epsilon}_{x}$ on $M$ given by
\begin{equation}
\nonumber
Q^{\epsilon}_{x}(A) =
\int_{\theta^\epsilon-\epsilon}^{\theta^\epsilon+\epsilon}
\I_{A}(s,u) \frac{1}{2\epsilon}
\dd u.
\end{equation}

In other words, the random outgoing angle is distributed uniformly on $[\theta^\epsilon-\epsilon,\theta^\epsilon+\epsilon]$.
This choice of $Q^\epsilon$ is discussed further below.

\medskip

Denote by $\mathcal{B}$ the Borel $\sigma$-field on $M$, and $\mathcal{P}$ the set probability measures on $\mathcal{B}$,
and the total variational distance on $\mathcal{P}$ denoted by $\|\mu-\nu\| = \sup_{A \in \mathcal{B}} |\mu(A)-\nu(A)|$. 

\begin{definition*}
The stochastic perturbation of the map $T$ is given by the transition kernel
\(
P_{\epsilon}(x,A) = Q^{\epsilon}_{Tx}(A),\
 x\in M,\ A\in \mathcal{B}.
\)
\end{definition*}

Observe that $P_{\epsilon}(.,A)$ is a measurable function for every $A \in \mathcal{B}$, and $P_{\epsilon}(x,.)$ is a measure on $\mathcal{B}$ for every $x\in M$.

The \emph{push-forward} operator $\mu \mapsto \mu P_\epsilon$ for $\mu\in\mathcal{P}$ is given by
$$\mu P_\epsilon(A) = \int _{M} \mu (\dd x) P_\epsilon(x,A),$$
and we say that $\mu\in\mathcal{P}$ is \emph{invariant for $P_\epsilon$} if $\mu P_\epsilon = \mu$, see Section~\ref{sec:markov}.

\begin{theorem}
\label{theo-rolandgarros}
Suppose that $D$ is strictly convex and its boundary~$\Gamma$ is $C^2$.
For each $0 < \epsilon < \frac{\pi}{2}$, there exists a unique invariant measure $\nu_\epsilon$ for $P_\epsilon$, and moreover there exists $\gamma_\epsilon>0$ such that
\(
\left\| \big. \mu P^n_{\epsilon} - \nu_\epsilon \right\| \leqslant e^{-\gamma_\epsilon n} \; \:
\mbox{for all $\mu\in\mathcal{P}$ and $n\in\N$}.
\)
\end{theorem}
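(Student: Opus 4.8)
The plan is to establish uniform ergodicity via a Doeblin-type minorization condition: I will show that there exist an integer $n_0 \geq 1$ and a constant $\delta > 0$, together with a probability measure $\varphi$ on $(M,\mathcal{B})$, such that
\[
P^{n_0}_\epsilon(x, A) \geq \delta\, \varphi(A) \quad \text{for all } x \in M,\ A \in \mathcal{B}.
\]
Once this is in place, the conclusion follows from the standard theory of uniformly ergodic Markov chains: the operator $\mu \mapsto \mu P^{n_0}_\epsilon$ is a contraction in total variation with coefficient $1-\delta$, so there is a unique invariant measure $\nu_\epsilon$, and iterating gives $\|\mu P^{kn_0}_\epsilon - \nu_\epsilon\| \leq (1-\delta)^k$; interpolating over the remaining $n_0-1$ steps and setting $\gamma = -\tfrac{1}{n_0}\log(1-\delta)$ yields the stated bound for all $n \in \N$.

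The core of the argument is therefore the construction of this minorization. The key mechanism is the one highlighted informally in the introduction: although KAM curves trap \emph{deterministic} trajectories near $\partial M$, an arbitrarily small random perturbation of Example~1 type lets the chain escape any neighbourhood of the boundary. Concretely, I would argue in two stages. \textbf{First}, starting from an arbitrary $x \in M$, I show that in a bounded number of steps (bounded uniformly in $x$) the chain reaches, with probability bounded below, a fixed compact ``good'' region $K \subset \{(s,\theta): \theta_* \leq \theta \leq \pi-\theta_*\}$ bounded away from $\partial M$. The perturbation density is $\tfrac{1}{2\epsilon}$ on an interval of length $2\epsilon$ around $\theta^\epsilon(Tx)$, so after one step the angle component lands in $[\epsilon,\pi-\epsilon]$-ish range with a density that is uniformly bounded below wherever it is positive; the only subtlety is that $Tx$ — hence the base point $s$ — is still determined by the deterministic dynamics, but since $\theta$ is now controlled and the billiard map $T$ is a diffeomorphism on the compact cylinder $M$, a further finite number of steps spreads the $s$-coordinate as well. \textbf{Second}, from the good region $K$ I produce the absolutely continuous minorizing component: because $T$ is a twist diffeomorphism (image of each vertical line is a curve with slope positive and bounded away from $0$ and $\infty$), applying $T$ and then convolving the angle with the uniform kernel of width $2\epsilon$ produces, after two or three steps, a measure on $M$ that dominates a constant multiple of two-dimensional Lebesgue measure on an open subset of $M$. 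The twist property is exactly what guarantees that as the starting point varies over $K$ the images sweep out a genuinely two-dimensional set rather than staying on a lower-dimensional curve, so the $s$-smearing and the $\theta$-smearing together give a density bounded below on a fixed rectangle; take $\varphi$ to be normalized Lebesgue measure on that rectangle.

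The main obstacle I anticipate is the uniformity near $\partial M$, i.e. making the first stage work with constants independent of how close the initial $\theta$ is to $0$ or $\pi$. When $\theta$ is tiny, a deterministic collision keeps $\theta$ tiny, and although the perturbation adds a uniform variable on $[0,2\epsilon]$ (in Example~1, once $\theta < \epsilon$ the angle is reset to uniform on $[0,2\epsilon]$, which already escapes in one step), one must check that this reset genuinely happens with probability bounded below rather than being killed by the geometry — in particular that the $\theta$-coordinate after the deterministic part of the move really does fall in the regime where $\theta^\epsilon$ clamps it to $\epsilon$, and that the points of null curvature allowed in the hypothesis do not create a trap where trajectories spend unboundedly long times. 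I expect this to be handled using the compactness of $M$ together with the explicit form of $Q^\epsilon_x$: the clamping $\theta^\epsilon = \min\{\max(\theta,\epsilon),\pi-\epsilon\}$ means that for \emph{every} $x$ the outgoing angle has density at least $\tfrac{1}{2\epsilon}$ on all of $[\epsilon,\pi-\epsilon]\cap[\theta^\epsilon-\epsilon,\theta^\epsilon+\epsilon]$, which always contains a fixed subinterval around $\epsilon$ or around $\pi-\epsilon$; this single-step uniform lower bound on the angular law, independent of $x$, is the crucial input that defeats the KAM obstruction, and the rest is a compactness-plus-twist argument to convert it into a full two-dimensional minorization.
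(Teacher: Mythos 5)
Your overall framework matches the paper's: a Doeblin minorization $P^{n_0}_\epsilon(x,\cdot)\ge\delta\,\varphi$ obtained by combining (i) the one-step escape from $\partial M$ forced by the clamping $\theta^\epsilon=\min\{\max(\theta,\epsilon),\pi-\epsilon\}$, and (ii) the twist property ($\partial s_1/\partial\theta$ bounded away from zero, Lemma~\ref{imageb}) to convert angular smearing into a genuine two-dimensional density after two steps. Both ingredients are correct and are indeed what the paper uses, and the reduction of the theorem to the minorization is standard and fine.

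The gap is in the passage from local spreading to a \emph{common} minorizing measure. After two steps from $x$, the law $P^2_\epsilon(x,\cdot)$ does dominate a multiple of Lebesgue measure on a rectangle of uniformly controlled size, but the \emph{location} of that rectangle depends on $x$ (it sits near $T^2x$); even the angular marginal after one step admits no common lower bound, since for $\theta(Tx)$ near $0$ it charges $[0,2\epsilon]$ while for $\theta(Tx)$ near $\pi$ it charges $[\pi-2\epsilon,\pi]$, and these are disjoint once $\epsilon<\pi/4$ --- your own phrase ``a fixed subinterval around $\epsilon$ \emph{or} around $\pi-\epsilon$'' concedes exactly this. To produce a single $\varphi$ you must show that one \emph{fixed} target rectangle is reachable, with density bounded below, from \emph{every} starting point in a uniformly bounded number of steps. ``Compactness plus twist'' does not give this: it says nothing about how the deterministic billiard transports regions around the cylinder, and a priori orbits issued from some box could avoid the target box for an unbounded number of iterates. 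The paper fills precisely this hole with a transitivity statement for $\epsilon$-angular perturbed orbits (Proposition~\ref{previous}): Poincar\'e recurrence and the Birkhoff--Khinchin theorem for $T$ with respect to its invariant measure $\frac{1}{2|\Gamma|}\sin\theta\,\dd s\,\dd\theta$ give, inside each box $U$ of a fixed grid, a return time $n_U\le\nu(U)^{-1}$, hence uniformly bounded; a least-common-multiple trick synchronizes these times so that every box is mapped over itself at a common step $N_1$; and a chain of overlapping boxes then connects any two points of $M_{c_1}$ in $kN_1$ steps, with two extra steps to handle the boundary collar. Some argument of this kind --- invoking recurrence or ergodicity of the \emph{unperturbed} billiard, not merely compactness --- is indispensable, and it is the missing core of your proposal.
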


The proof of Theorem~\ref{theo-rolandgarros} will follow from classical results and from two propositions. The first, Proposition~\ref{pr:existedensidade}, shows that for $n\ge2$ the transition kernel  for the iterated process $P^n_\epsilon(x, A)$ has an associated density function $p^{n}_\epsilon(x,y)$, and the second, Proposition~\ref{pr:principal}, shows that there exist an uniform coupling time, that is, for some $N\ge 2$, the density function $p^N_\epsilon(x,y)$ is uniformly bounded away from $0$. The proof of Proposition~\ref{pr:principal} is done by topological methods when working in the full generality of the hypotheses from Theorem ~\ref{theo-rolandgarros}, but with a slightly more restrictive condition we can derive a quantitative version of Proposition~\ref{pr:principal}, where the number of iterates needed to obtain a strictly positive density function is bounded by a function of $\epsilon$.

\begin{theorem}
\label{thm:epsilonsquared}
Suppose that $D$ is strictly convex and its boundary~$\Gamma$ is $C^2$, with nowhere null curvature.
Then there exists $k>0$ such that, for every $0<\epsilon <\pi/2$, if $n>2+\frac{k}{\epsilon^2}$, there exist $b=b(\epsilon) > 0$ such that the density (see Section 4) verifies 
\begin{equation}
\nonumber
p^n_\epsilon(x,y) > b, \quad \forall \, x,y \in M. 
\end{equation}
\end{theorem}

\medskip

The transition kernel $Q^\epsilon$ considered here is a small random perturbation of a close to integrable Hamiltonian system.
The invariant measure~$\nu_\epsilon$ does not satisfy the so-called cosine law exactly, but does approximately as $\epsilon$ is chosen small enough.

Our choice of this particular perturbation is motivated by physical situations where either the scale of the surface irregularities is smaller than but comparable to the diameter of the reflected object, or the billiard ball is not perfectly rigid.
Collision of a round ball with a rough surface is equivalent to the collision of a point particle with the convolution of the surface with a ball.
In the scenario depicted on Figure~\ref{figure1}, such a point particle would see the topmost part of a discrete set of circles, and the random deviation comes from the uncertainty about the slope at the point where the particle hits the circle.
Only part of the circles is exposed to the particle, and the range of possible slopes depends on the ratio between the scale of roughness and the radius of the ball.

\begin{figure}[ht!]
\includegraphics[width=90mm]{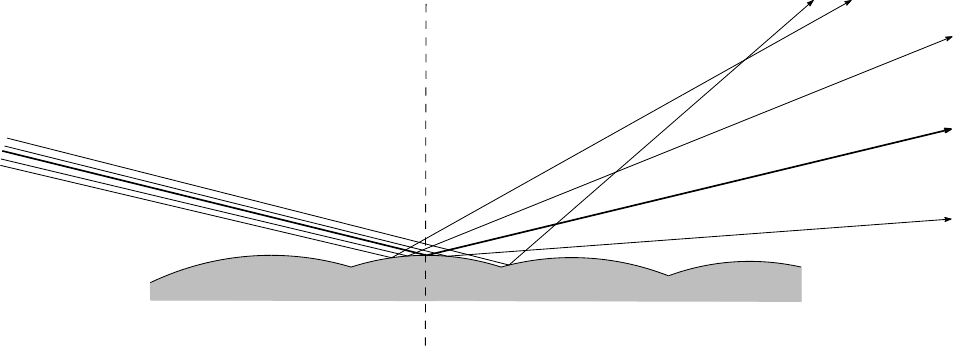}
\hfill
\includegraphics[width=55mm]{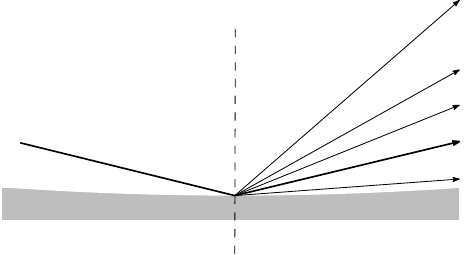}
\caption{\small Left: mesoscopic view of a round ball colliding with a rough surface (or, equivalently, point mass colliding with an irregular but microscopically smooth surface). Right: macroscopic effective model for the same collision. The thicker trajectory has equal incidence and outgoing angles.}
\label{figure2}
\end{figure}

When the incidence angle $\theta$ is far from $0$ and $\pi$, the choice of a constant density on $[\theta-\epsilon,\theta+\epsilon]$ is not particularly important for the qualitative behavior of the system, and is in fact irrelevant in our proof.
The important property is that the density is bounded from below within a certain distance from $\theta$, uniformly over $x\in M$.

When the incidence angle is very small, the randomness of the outgoing angle is no longer determined by how the particle hits a given protuberance: it is rather sensitive to which parts of the protuberances are visible to the particle. For the reflected angle to be yet smaller than the incidence angle, it would require the particle to hit the surface on the back side, which becomes less likely as the incidence angle tends to zero.
This explains the asymmetry seen in Figure~\ref{figure2}.

\medskip

Theorem~\ref{theo-rolandgarros} remains valid, with essentially the same proof, for a much broader class of distributions.
What is relevant to the proof is that the probability density of the outgoing angle is bounded from below on some interval around $\theta$ whose length is also bounded from below.

Yet, the validity of Theorem~\ref{theo-rolandgarros} is far from being general.
For instance, if one takes the outgoing angle uniformly distributed on
$[0,2\theta]$ for $\theta<\epsilon$, that is, symmetric around $\theta$, the resulting stochastic dynamics is not only non-ergodic, but it gets quickly absorbed by a random point at the boundary~$\partial M$.
We omit the proof of this fact.

\section{Basic properties of deterministic billiards in convex tables}
\label{sec:deterministic}

The map $T$ preserves the probability measure
$\nu$ defined by $d\nu = \frac{1}{2|\Gamma|} \ \sin \theta \ ds \ d\theta.$
It satisfies an involution property: if $I: M\to M$ is defined by $I(q, \theta) =
(q, \pi - \theta)$, then $T^{-k} \circ I = I \circ T^k, k \in \Z.$

If $k \geq 2, T$ is a $C^{k-1}$ diffeomorphism in the interior of $M$.
For every $x \in M$, the matrix of the differential $ D_{x}T $ reads as
\begin{equation}
\nonumber
D_{x}T =
\begin{pmatrix}
\frac {\kappa(x) t(x) - \sin \theta(x)}{\sin \theta(Tx)} &
\frac {t(x)}{\sin \theta(Tx)} \\
\frac{\kappa(Tx) \kappa(x) t(x) - \kappa(Tx) \sin \theta(x)}
{\sin \theta(Tx)} - \kappa(x) & \frac{\kappa(Tx) t(x)}{\sin \theta(Tx)} - 1
\end{pmatrix},
\end{equation}
where $ \kappa(x) $ is the curvature of $ \Gamma $ at $ q(x) $, and $t(x)$ is the
distance between $q(x)$ and $q(Tx).$ Both values are continuous for
$x \in \textrm{int} M,$ the interior of $M$ (see~\cite{chernov-markarian-06} for a proof, noticing that here we use a different parametrization of angles).

\begin{lemma}\label{imageb}
If $T(x) = (s_1(x), \theta_1(x))$,  then $\left(\frac{\partial s_1}{\partial \theta}\right)^{-1}$ can be continuously extended to the boundary of $M$. In particular, $\frac{\partial s_1}{\partial \theta}$ is bounded away from zero.
\end{lemma}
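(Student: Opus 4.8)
The plan is to first extract from the displayed formula for $D_xT$ the identity $\frac{\partial s_1}{\partial\theta}(x)=\frac{t(x)}{\sin\theta(Tx)}$, valid on the interior of $M$, so that $\left(\frac{\partial s_1}{\partial \theta}\right)^{-1}(x)=\frac{\sin\theta(Tx)}{t(x)}$; on $\mathrm{int}\,M$ this is positive and continuous, because strict convexity forbids the chord from $q(x)$ to $q(Tx)$ from being tangent to $\Gamma$ (so $q(Tx)\ne q(x)$ and $\theta(Tx)\in(0,\pi)$). I would then show that this function extends continuously to $\partial M$ by the map $x\mapsto\tfrac12\kappa(q(x))$, which vanishes precisely at the isolated points of null curvature (there $\frac{\partial s_1}{\partial\theta}\to\infty$, consistent with the statement). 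Granting the extension, the extended function is continuous on the compact set $M$, hence bounded, which immediately gives that $\frac{\partial s_1}{\partial\theta}$ is bounded away from zero. A useful first step is the geometric identity: if $\ell(x)$ is the tangent line to $\Gamma$ at $q(Tx)$, then, resolving the chord $q(x)-q(Tx)$ along and across $\ell(x)$, one gets $\sin\theta(Tx)=\mathrm{dist}(q(x),\ell(x))/t(x)$, so that $\left(\frac{\partial s_1}{\partial \theta}\right)^{-1}(x)=\mathrm{dist}(q(x),\ell(x))/t(x)^2$.

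Next I would establish two local facts near $\partial M$. The first is that $t(x)\to0$ as $x\to\partial M$: if $x_n=(s_n,\theta_n)\to(s_0,0)$ while $t(x_n)$ stays bounded away from $0$, then along a subsequence the chords $q(x_n)q(Tx_n)$ converge to a nondegenerate segment of $\Gamma$ lying on the tangent line to $\Gamma$ at $q(s_0)$, contradicting strict convexity; the case $\theta_n\to\pi$ is the same. The second is the local expansion. Fix $x_0=(s_0,0)\in\partial M$ and take $x=(s,\theta)\in\mathrm{int}\,M$ close to $x_0$; write $q_0=q(x)$, $\kappa_0=\kappa(q_0)$, and use coordinates at $q_0$ with first axis the oriented tangent and second axis the inner normal. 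Since $\Gamma$ is $C^2$ and compact, near $q_0$ it is — uniformly in $q_0$ — the graph of a convex function $f$ with $f(\xi)=\tfrac{\kappa_0}{2}\xi^2+\xi^2a(\xi)$ and $f'(\xi)=\kappa_0\xi+\xi b(\xi)$, where $a(\xi),b(\xi)\to0$ as $\xi\to0$ uniformly in $q_0$. The outgoing ray is $\{r(\cos\theta,\sin\theta):r\ge0\}$ and, as $t(x)\to0$, its next intersection with $\Gamma$ lies in this chart, at $(\xi_1,f(\xi_1))$ with $\xi_1=t(x)\cos\theta$ and $t(x)\sin\theta=f(\xi_1)$. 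Computing the distance from the origin to the tangent line at $(\xi_1,f(\xi_1))$ gives
\[
\mathrm{dist}(q_0,\ell(x))=\frac{\xi_1f'(\xi_1)-f(\xi_1)}{\sqrt{1+f'(\xi_1)^2}}=\xi_1^2\Big(\tfrac{\kappa_0}{2}+c(\xi_1)\Big),\qquad c(\xi_1)\to0\text{ uniformly in }q_0,
\]
and, using $\xi_1^2=t(x)^2\cos^2\theta$,
\[
\left(\frac{\partial s_1}{\partial \theta}\right)^{-1}(x)=\frac{\mathrm{dist}(q_0,\ell(x))}{t(x)^2}=\cos^2\theta\,\Big(\tfrac{\kappa_0}{2}+c\big(t(x)\cos\theta\big)\Big)\ \xrightarrow[x\to x_0]{}\ \tfrac12\kappa(q(x_0)),
\]
because $\theta\to0$, $t(x)\to0$ and $\kappa(q(s))\to\kappa(q(s_0))$ by continuity of curvature; only $\cos^2\theta\to1$ was used, so the same limit holds at boundary points with $\theta=\pi$.

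Finally I would assemble the extension. Let $G$ equal $\left(\frac{\partial s_1}{\partial \theta}\right)^{-1}$ on $\mathrm{int}\,M$ and $\tfrac12\kappa(q(\cdot))$ on $\partial M$. It is continuous on $\mathrm{int}\,M$ since $t$ and $\theta\circ T$ are; it is continuous at each boundary point since, along any approaching sequence, the interior terms are governed by the limit just displayed and the boundary terms by continuity of $\kappa$. Hence $G$ is the desired continuous extension, and, being continuous on the compact set $M$, it is bounded; therefore $\frac{\partial s_1}{\partial\theta}=1/G$ is bounded away from zero. The steps I expect to be the most delicate are the uniformity (over $q_0\in\Gamma$) of the graph expansion of $\Gamma$, which is where $C^2$-regularity is used, and the claim $t(x)\to0$ near $\partial M$ together with the fact that the next collision point then falls in the local chart, which is where strict convexity is used; the possible vanishing of $\kappa_0$ causes no difficulty, since it only makes the limiting value equal to $0$.
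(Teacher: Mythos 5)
Your proof is correct and follows essentially the same route as the paper: both read off $\frac{\partial s_1}{\partial\theta}=t(x)/\sin\theta(Tx)$ from the displayed matrix $D_xT$ and identify the boundary limit of its reciprocal as $\tfrac12\kappa(q(\overline{x}))$. The only real difference is that the paper simply asserts the standard asymptotic $\kappa(Tx_n)t(x_n)/(2\sin\theta(Tx_n))\to 1$, whereas you prove the equivalent limit $\sin\theta(Tx)/t(x)\to\tfrac12\kappa(q(\overline{x}))$ from scratch, via the identity $\sin\theta(Tx)=\mathrm{dist}(q(x),\ell(x))/t(x)$ and a uniform second-order expansion of $\Gamma$ over the tangent at $q(x)$. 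This extra work is a genuine gain rather than padding: your form of the limit remains meaningful and correct at the isolated points of null curvature, where the ratio quoted in the paper is of indeterminate $0/0$ type, and your argument makes explicit exactly where strict convexity (to force $t(x)\to 0$ at $\partial M$) and $C^2$ regularity (for the uniform Taylor expansion) are used.
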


\begin{proof}
Using the expression of $D_xT$ we obtain that $\frac{\partial s_1}{\partial \theta}= \frac {t(x)}{\sin \theta_1(x)}$. 
If $x_n$ is a sequence converging to a boundary point $\overline{x}$, then both $t(x_n)$ and $\sin \theta_1(x_n)$ converge to $0$, but in
this case $\frac{\kappa(Tx_n) t(x_n)}{2\sin \theta(Tx_n)}$ tends to 1 and the $\left(\frac{\partial s_1}{\partial \theta}\right)^{-1}$ goes to $\kappa(T\overline{x})/2$.
\end{proof}

We remark that $T$ being a twist map holds on more general tables, in particular if $D$ is convex with $C^1$ boundary.
In this case the map $T$ is an homeomorphism in $\textrm{int}M$ that can be extended defining $Tx = x$ for every $x \in \partial M$.
For this extension $x \mapsto q(T(x))$ is not continuous in $x$ if $q(x)$ is in the interior of a segment of the boundary, but $x \mapsto \theta(T(x))$ is nonetheless continuous.

For any $0<a<\frac{\pi}{2}$, we define the cylinder $M_a=[0, |\Gamma |)\times [a, \pi-a]$.

\begin{lemma}
\label{c}
Suppose that $D$ is convex with $C^1$ boundary.
Given $\epsilon > 0$, there
exist $0 <c_2 < c_1 < \epsilon$ satisfying the following conditions:
$M_{\epsilon}\subset T(M_{c_1})$, $M_{c_1}\subset T^2(M_{c_2})$ and
$T^2(M_{c_1})\subset M_{c_2}$.
\end{lemma}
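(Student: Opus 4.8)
The plan is to reduce the statement to two elementary facts, both already available from the discussion above: (i) under the stated hypotheses $T$ restricts to a homeomorphism of $\textrm{int}\,M$ onto itself, so each of $T^{\pm1}$ and $T^{\pm2}$ is continuous on $\textrm{int}\,M$ and maps $\textrm{int}\,M$ bijectively onto $\textrm{int}\,M$; and (ii) every compact subset $K$ of the open cylinder $\textrm{int}\,M=(\R/|\Gamma|\Z)\times(0,\pi)$ is contained in some $M_a$ with $a>0$, since $x\mapsto\min\{\theta(x),\pi-\theta(x)\}$ is continuous and strictly positive on $\textrm{int}\,M$ and hence attains a positive minimum on $K$. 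Note that the discontinuity of $x\mapsto q(Tx)$ occurs only on $\partial M$, so it is irrelevant here.

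First I would construct $c_1$. Because $\epsilon>0$, $M_\epsilon$ is a compact subset of $\textrm{int}\,M$, and by (i) so is $T^{-1}(M_\epsilon)$; by (ii) pick $a_1>0$ with $T^{-1}(M_\epsilon)\subset M_{a_1}$ and set $c_1=\min\{a_1,\epsilon/2\}$. Then $c_1<\epsilon$, and since $c_1\le a_1$ we have $M_{c_1}\supset M_{a_1}\supset T^{-1}(M_\epsilon)$; applying $T$ and using that $T$ is a bijection of $\textrm{int}\,M$ yields $T(M_{c_1})\supset M_\epsilon$, which is the first inclusion.

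Next I would construct $c_2$ in the same manner. Now $c_1>0$, so $M_{c_1}$ is compact in $\textrm{int}\,M$, and by (i)--(ii) there is $a_2>0$ with both $T^{2}(M_{c_1})\subset M_{a_2}$ and $T^{-2}(M_{c_1})\subset M_{a_2}$. Put $c_2=\frac{1}{2}\min\{a_2,c_1\}$, so that $0<c_2<c_1$. From $c_2\le a_2$ we get $M_{c_2}\supset M_{a_2}\supset T^2(M_{c_1})$, the third inclusion; and from $T^{-2}(M_{c_1})\subset M_{a_2}\subset M_{c_2}$, applying $T^2$, we get $M_{c_1}\subset T^2(M_{c_2})$, the second inclusion. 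Together with the previous step this produces $0<c_2<c_1<\epsilon$ satisfying all three required inclusions.

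I do not expect a genuine obstacle: the content is purely topological once one observes that the cylinders $M_a$ exhaust $\textrm{int}\,M$ from inside as $a\downarrow0$, and that $T$ is a self-homeomorphism of $\textrm{int}\,M$. The two points that need a little care are keeping the chain of inclusions oriented correctly (the inclusions $M_\epsilon\subset T(M_{c_1})$ and $M_{c_1}\subset T^2(M_{c_2})$ go through preimages, whereas $T^2(M_{c_1})\subset M_{c_2}$ goes through a forward image), and checking that none of the sets $T^{-1}(M_\epsilon)$, $T^{2}(M_{c_1})$, $T^{-2}(M_{c_1})$ meets $\partial M$, which is precisely where $T$ and its extension misbehave; this is guaranteed because $T$ carries $\textrm{int}\,M$ onto $\textrm{int}\,M$.
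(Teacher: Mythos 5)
Your proof is correct and rests on the same facts the paper uses (continuity of $T$ on $\textrm{int}\,M$, that $T$ maps $\textrm{int}\,M$ onto itself, and that the cylinders $M_a$ exhaust $\textrm{int}\,M$), so it is essentially the same approach; the paper's own proof merely states the squeezing property $M_{\delta_1(a)}\subset T(M_a)\subset M_{\delta_2(a)}$ and leaves the choice of $c_1,c_2$ implicit, whereas you carry out that bookkeeping explicitly via compactness of $T^{\pm 1}(M_\epsilon)$ and $T^{\pm 2}(M_{c_1})$.
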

\begin{proof}
Although $T$ may not be continuous in $M$, $\theta (Tx)$ is still continuous. This implies that for sufficiently small $a > 0$ there exists  $\delta_1(a), \delta_2(a)>0$ such that $M_{\delta_1(a)}\subset T(M_a)\subset M_{\delta_2(a)}$, and that $\delta_1(a), \delta_2(a)$ converge to $0$ as $a \to 0$.
\end{proof}

For the deterministic billiard on a sufficiently smooth table, Lazutkin~\cite{lazutkin-73} proved the following regularity result, see also~\cite{douady-82}.
If $D$ is convex with smooth boundary and curvature bounded from below then there exists a subset $M'$ of the phase space $M$ that has positive measure and is foliated by invariant curves; the set $M'$ accumulates on the horizontal boundaries of $M$, the map $T$ restricted to each such curve is topologically equivalent to an irrational rotation; close to the boundary ($\theta=0$ or $\pi$ in the phase space) there is a set of positive measure with regular behavior.
In fact in the circle or the ellipse the whole phase space is foliated by invariant curves.
Theorem~\ref{theo-rolandgarros} shows that this regularity can be broken by an arbitrarily small stochastic perturbation.

There are billiards on convex regions with no invariant curves near the boundary.
These billiards have trajectories with an infinite number of bounces in finite time
as they approach a point of the boundary.
They can be constructed either violating the condition on the curvature or the differentiability of the boundary.
In~\cite{halpern-77}, Halpern constructed a curve that has nowhere vanishing curvature but unbounded third derivative, and proved that there are trajectories bearing this pathological behavior.
Mather~\cite{mather-82} constructed a convex billiard with $C^2$ boundary violating the condition of non-null curvature, and which has trajectories coming arbitrarily close to being positively tangent to the boundary and then arbitrarily close to being negatively tangent to the boundary.

\section{Markov chains and their densities}
\label{sec:markov}

Recall that the stochastic perturbation of the map $T$ is given by the transition kernel
\[
P_{\epsilon}(x,A) = Q^{\epsilon}_{Tx}(A),
\quad x\in M,\ A\in \mathcal{B}.
\]

Let $ P^n_\epsilon$ denote the $n$-th power of the kernel
$$
P^{n+1}_\epsilon(x,A) =\int_M P_\epsilon (x,\dd y) P^n_\epsilon(y,A).
$$ 
$P^n_{\epsilon}(x,.)$ is a probability measure on $\mathcal{B}$ for every $x\in M$.
Moreover, as operators on $\mathcal{P}$ they satisfy $\mu P^n_\epsilon = (\mu P^{n-1}_\epsilon) P_\epsilon$, and defining $P_\epsilon^0(x,A) = \I_{A}(x)$, the set $(P^n_\epsilon)_{n\in\N_0}$ forms a semi-group.

\begin{proposition}\label{pr:existedensidade}
For the stochastic billiard map there exist density functions $p^n_{\epsilon}(x,y)$ such that,
for every $x\in M$, $n\ge 2$, and $A \in \mathcal{B}$,
$$
P^n_{\epsilon}(x,A) = \int_A p^n_{\epsilon}(x,y) \dd y
.
$$
\end{proposition}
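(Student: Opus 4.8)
The plan is to show that the two-step kernel $P^2_\epsilon(x,\cdot)$ is already absolutely continuous with respect to the Lebesgue measure $\dd s\,\dd\theta$ on $M$, with a density that is jointly measurable in both arguments, and then to obtain the case $n>2$ from the Chapman--Kolmogorov identity $P^n_\epsilon=P^{n-2}_\epsilon\,P^2_\epsilon$ together with Tonelli's theorem. (For $n=1$ there is no density, since $P_\epsilon(x,\cdot)=Q^\epsilon_{Tx}$ is carried by a single vertical segment; two steps are needed precisely to spread the mass in the $s$-direction.)

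First I would unfold two steps of the chain from a fixed $x$. Since $P_\epsilon(x,\cdot)=Q^\epsilon_{Tx}$ is carried by $\{s=s(Tx)\}$ with angle uniform on $J:=[\theta(Tx)^\epsilon-\epsilon,\ \theta(Tx)^\epsilon+\epsilon]$, the semigroup property gives, for $A\in\mathcal B$,
\[
P^2_\epsilon(x,A)=\frac{1}{2\epsilon}\int_{J} P_\epsilon\big((s(Tx),\theta),A\big)\,\dd\theta
 =\frac{1}{4\epsilon^{2}}\int_{J}\!\int_{\R}\I_A\big(g(\theta),u\big)\,\I\big[u\in J(\theta)\big]\,\dd u\,\dd\theta,
\]
where $g(\theta):=s\big(T(s(Tx),\theta)\big)$, $\phi(\theta):=\theta\big(T(s(Tx),\theta)\big)$, and $J(\theta):=[\phi(\theta)^\epsilon-\epsilon,\ \phi(\theta)^\epsilon+\epsilon]$. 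Because the interval $J$ has length $2\epsilon$ and can meet $\{0,\pi\}$ only at an endpoint, one has $\theta\in(0,\pi)$ for $\dd\theta$-almost every $\theta\in J$, so $(s(Tx),\theta)\in\textrm{int}\,M$; hence $T$ is a genuine $C^1$ diffeomorphism near that point and $g,\phi$ are continuous on $J$ and $C^1$ on its interior.

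The substantive step is the change of variables $\sigma=g(\theta)$. By Lemma~\ref{imageb} the reciprocal $(\partial s_1/\partial\theta)^{-1}$ extends continuously to the compact cylinder $M$, so $g'(\theta)=\partial s_1/\partial\theta\big|_{(s(Tx),\theta)}$ is strictly positive and bounded away from zero; thus $g$ is a strictly increasing homeomorphism of $J$ onto an interval $g(J)$ whose inverse $h:=g^{-1}$ is Lipschitz, its derivative $1/g'$ remaining bounded even at isolated points of null curvature. Performing this (absolutely continuous) change of variables in the display above yields
\[
P^2_\epsilon(x,A)=\int_A p^2_\epsilon(x,y)\,\dd y,\qquad
p^2_\epsilon\big(x,(\sigma,u)\big):=\frac{|h'(\sigma)|}{4\epsilon^{2}}\,\I\big[\sigma\in g(J)\big]\,\I\big[u\in J(h(\sigma))\big],
\]
a density that is bounded on $M$ and, since $g,h$ and $J(\cdot)$ depend measurably on $x$ through $(s(Tx),\theta(Tx))$, jointly measurable in $(x,y)$.

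Finally, for $n>2$ I would write, using Chapman--Kolmogorov and then Tonelli's theorem (all integrands being nonnegative),
\[
P^n_\epsilon(x,A)=\int_M P^{n-2}_\epsilon(x,\dd z)\int_A p^2_\epsilon(z,y)\,\dd y
 =\int_A\Big(\int_M p^2_\epsilon(z,y)\,P^{n-2}_\epsilon(x,\dd z)\Big)\dd y,
\]
so that $p^n_\epsilon(x,y):=\int_M p^2_\epsilon(z,y)\,P^{n-2}_\epsilon(x,\dd z)$ does the job. The main obstacle is exactly the two-step absolute continuity, and within it the fact that the intermediate random angle is carried to the next arc-length coordinate by a map $g$ whose derivative is bounded away from zero --- which is precisely the twist property of $T$ recorded in Lemma~\ref{imageb}. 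The only mild care needed is that $g'$ may blow up at isolated points of null curvature, which is harmless since only the reciprocal $h'$, not $g'$ itself, enters the change of variables; the remaining point, joint measurability of $p^2_\epsilon$, is routine bookkeeping.
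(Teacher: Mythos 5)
Your argument is essentially the paper's own proof: both unfold two steps of the chain, observe that the first step produces a uniform angle on a vertical segment, and convert the intermediate angular variable into the arc-length variable of the second collision via the change of variables $\dd\theta' = \frac{\partial\theta'}{\partial s_1'}\,\dd s_1'$, with Lemma~\ref{imageb} guaranteeing that the Jacobian factor $(\partial s_1/\partial\theta)^{-1}$ is bounded. Your handling of $n>2$ by writing $P^n_\epsilon = P^{n-2}_\epsilon P^2_\epsilon$ and applying Tonelli is a cleaner and more explicit version of the paper's ``the general case is analogous,'' but it is not a different method.
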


\begin{proof}
If $A=[\tilde s, \hat s]\times [\tilde \theta, \hat \theta], x= (s, \theta),
Tx = (s_1, \theta_1), z = (s_1, \theta'), Tz =(s'_1, \theta_1')$ then
$$P^2_\epsilon(x, A) = \frac 1 {4\epsilon ^2}\int_{[\theta^{\epsilon} _1 -\epsilon, \theta^{\epsilon}_1 + \epsilon]}d \theta '\int_{[-\epsilon, \epsilon]}
I_A (s'_1, \theta'^{\epsilon}_1 + u) \dd u.
$$ 
Changing variables $d\theta' =  \frac{\partial \theta'} {\partial s'_1} \; ds'_1$,
and using Lemma~\ref{imageb},
we obtain the desired density.
The general case is analogous.
\end{proof}

A Markov chain is said to satisfy \emph{D\"oblin's condition} if there exists a probability measure $\lambda$, $n>0$ and $\delta_1<1,\,\delta_2>0$ such that, whenever $\lambda(A)>\delta_1$, then $P^n_{\epsilon}(x, A)>\delta_2$, for all $x\in M$.

Theorem~\ref{theo-rolandgarros} is a consequence of the following result.

\begin{proposition}
\label{pr:principal}
Suppose that $D$ is strictly convex and its boundary~$\Gamma$ is $C^2$.
Then for every $0<\epsilon <\pi/2$, there exist $b > 0$ and $N > 0$ such that 
\begin{equation}
\nonumber
p^N_\epsilon(x,y) > b, \quad \forall \, x,y \in M. 
\end{equation}
\end{proposition}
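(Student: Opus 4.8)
The plan is to show that after a bounded number of steps, the $N$-step transition density is uniformly bounded below. The key mechanism is that the stochastic perturbation, by adding a uniform variable on an interval of length $2\epsilon$ around the would-be outgoing angle (with the truncation at the extremes via $\theta^\epsilon$), prevents the orbit from being trapped near the tangent boundary $\partial M$ and in fact spreads mass over a fixed compact cylinder $M_a$ for some $a>0$ depending only on $\epsilon$. More precisely, I would argue in two phases: first, that from \emph{any} starting point $x\in M$, after one step the chain already puts all its mass on a horizontal slice $\{s=s_1\}\times[\theta_1^\epsilon-\epsilon,\theta_1^\epsilon+\epsilon]$, and since $\theta_1^\epsilon\in[\epsilon,\pi-\epsilon]$ this slice contains angles in $[\epsilon,2\epsilon]$ or $[\pi-2\epsilon,\pi-\epsilon]$ — in any case a sub-slice lying inside some fixed $M_a$ with, say, $a=\epsilon/2$, carrying probability at least a fixed constant $c>0$. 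So $P_\epsilon(x,\cdot)$ restricted to $M_a$ dominates $c$ times a probability measure supported on $M_a$, uniformly in $x$.

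The second phase is to show that starting from $M_a$ (equivalently, from that fixed slice), the $k$-step density on a fixed target region $M_{a'}$ is bounded below, uniformly, for a suitable fixed $k$. Here I would use the Proposition on existence of densities together with the explicit formula $\partial s_1/\partial\theta = t(x)/\sin\theta_1(x)$ and Lemma~\ref{imageb}, which guarantees $\partial s_1/\partial\theta$ is bounded away from $0$ and $\infty$ on all of $M$ (including the boundary, via the continuous extension of its reciprocal to $\kappa/2$). Combined with the twist property, one step of the deterministic map followed by the $\frac{1}{2\epsilon}$-uniform smearing in the $\theta$-direction produces a density that is bounded below on a two-dimensional region: the $\theta$-smearing gives a fixed lower bound $\frac{1}{2\epsilon}$ in the vertical direction on an interval of length $\ge 2\epsilon$ around $\theta^\epsilon_1$, while the change of variables $d\theta' = \frac{\partial\theta'}{\partial s_1'}\,ds_1'$ with the Jacobian controlled by Lemma~\ref{imageb} spreads mass in the $s$-direction across an interval of length bounded below (by twist and compactness). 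Iterating a fixed finite number $k$ of such steps, and using that the images of horizontal segments under $T$ sweep across the whole $s$-range (again a compactness/continuity argument on $\mathrm{int}\,M$ together with the boundary control of Lemma~\ref{imageb}), one covers a fixed cylinder $M_{a'}$ with a uniformly positive density. A matching \emph{backwards} argument, using the time-reversal involution $I$ with $I\circ T^k = T^{-k}\circ I$, shows that from any point of $M_{a'}$ one can also reach any target point $y\in M$ in a bounded number of steps with uniformly positive density; chaining the forward phase into $M_{a'}$, a few ergodic-mixing steps inside $M_{a'}$, and the backward phase out to $y$, gives $p^N_\epsilon(x,y)>b$ for a fixed $N$ and $b>0$, uniformly in $x,y$.

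The main obstacle, and the part requiring the most care, is the behavior near the boundary $\partial M$: the deterministic map $T$ degenerates there (it is merely continuous in $\theta(Tx)$, not in $q(Tx)$, when the boundary has flat pieces, and $\sin\theta(Tx)\to 0$), so one cannot naively invoke uniform continuity of $T$ on all of $M$. The resolution is exactly Lemma~\ref{imageb}: even though $t(x)$ and $\sin\theta_1(x)$ both vanish at $\partial M$, their ratio stays bounded away from $0$ and $\infty$ because $\frac{\kappa(Tx)\,t(x)}{2\sin\theta(Tx)}\to 1$; this lets the Jacobian bounds survive uniformly up to the boundary. The second delicate point is to verify that a \emph{single} fixed number $k$ of steps suffices to get from the first-step slice to a full target cylinder $M_{a'}$ for \emph{all} starting points simultaneously — this is where one genuinely uses that after the first perturbed step the angle is already bounded away from $0$ and $\pi$ by $\epsilon$ (or its complement), so all subsequent analysis takes place on the compact set $M_{\epsilon/2}$ where $T$ and its derivatives are uniformly controlled, and the perturbation kernel has a uniform $\frac{1}{2\epsilon}$ lower bound on a full $2\epsilon$-interval. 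Once the problem is confined to this compact region, the remaining steps are a routine consequence of twist, continuity, and compactness.
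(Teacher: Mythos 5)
Your outline correctly identifies the easy parts (the one-step smearing pushes a fixed fraction of the mass into a compact cylinder $M_a$; Lemma~\ref{imageb} controls the Jacobian $\partial s_1/\partial\theta$ up to $\partial M$ so that a density lower bound survives the change of variables), but it assumes away the actual crux. The step ``iterating a fixed finite number $k$ of such steps, and using that the images of horizontal segments under $T$ sweep across the whole $s$-range \dots\ one covers a fixed cylinder $M_{a'}$'' is precisely what must be proved, and it is not a ``routine consequence of twist, continuity, and compactness.'' The twist property gives $\partial s_1/\partial\theta$ bounded away from $0$ and $\infty$, so one application of $T$ converts a $\theta$-interval of length $2\epsilon$ into an $s$-extent of order $\epsilon$; but nothing in the twist property forces the $s$-support to keep growing under iteration (the map may contract in the $s$-direction, and iterates of a twist map need not be twist maps), nor does the $\theta$-support obviously expand beyond a fixed interval of length comparable to $\epsilon$. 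A priori the perturbed orbit could remain confined near (a neighborhood of) an invariant curve, reaching only a bounded portion of $M$ no matter how many steps are taken. So the existence of a single $N$, uniform in $x$ and $y$, with $y\in\widehat{T}_\epsilon^N(x)$ for all $x,y$, is left unproved in your proposal.

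The paper closes exactly this gap with a different mechanism (Proposition~\ref{previous}): it applies the Poincar\'e Recurrence and Birkhoff--Khinchin theorems to the $T$-invariant measure $\nu$ to produce, inside any small box $U\subset M_{c_1}$, a deterministic orbit returning to $U$ in at most $\nu(U)^{-1}$ steps; the $\pm\epsilon$ angular freedom then lets a perturbed orbit shadow this returning orbit and land anywhere in $U$. A grid of overlapping rectangles, a least-common-multiple synchronization of the return times, and a chain of adjacent rectangles then connect any two points of $M_{c_1}$ in exactly $kN_1$ steps, and two extra steps (using $M_\epsilon\subset T(M_{c_1})$ and the fact that the perturbation covers the boundary strips) extend this to all of $M$. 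None of these ingredients — recurrence with a quantitative return-time bound, the synchronization, the chaining — appear in your argument, and without them the uniform $N$ does not follow. (Your proposed time-reversal step via the involution $I$ is also unnecessary in the paper's scheme and would itself need justification, since the perturbation is applied after the deterministic map and the reversed chain is not literally of the same form.) Once uniform reachability is in hand, your final step — positivity and continuity of $p^N_\epsilon$ plus compactness of $M\times M$ — does match the paper's conclusion of the proof.
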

We postpone its proof to the next section.

\begin{proof}
[Proof of Theorem~\ref{theo-rolandgarros}]
Proposition~\ref{pr:principal} implies that the chain is aperiodic, $\psi$-irreducible and satisfies D\"oblin's condition; then it is uniformly ergodic, see~\cite[Theorem~16.2.3]{meyn-tweedie-09}.
The result then follows from~\cite[Theorem~16.0.2]{meyn-tweedie-09}.
\end{proof}

\section{Proof of D\"oblin's condition}
\label{sec:proof}

Before proving Proposition~\ref{pr:principal} and Theorem~\ref{theo-rolandgarros}, we need a few additional technical steps, summarized in the next two propositions.

\begin{definition*}
We say that a sequence $\xi = (\xi_k)_{k\in{[0,\dots,l]}},\, \xi_k=(s_k, \theta_k) \in M$ is \emph{an $\epsilon$-angular perturbed orbit of length $l$} if, for all $0\le k<l$, $s(T(\xi_k))=s_{k+1}$ and $\abs{\theta(T(\xi_k))-\theta_{k+1}}<\epsilon$. 
\end{definition*} 

Let $\mathcal{O}_{\epsilon,l}$ denote the set of $\epsilon$-angular perturbed orbits of length $l$.
For $n>0$ define
\begin{equation}
\nonumber
\widehat{T}_\epsilon^n(x):=\{y \, : \; \exists \, \xi \in \mathcal{O}_{\epsilon,n}, 
\text{ such that } \xi_0=x, \xi_n=y\}.
\end{equation}
Starting at a point $x$, $\widehat{T}_\epsilon^n(x)$ is the set of points that may be reached in $n$ steps by following the deterministic billiard but allowing for perturbations smaller than
$\epsilon$ in the reflection angle.

\begin{proposition}
\label{pr:aux1}
If $0<l<n, n\ge 3,\, y\in \widehat{T}_\epsilon^n(x)$ and there exists $z$ in the interior of 
$M$ such that $z\in \widehat{T}_\epsilon^{l}(x)$ and $y\in \widehat{T}_\epsilon^{n-l}(z)$, then 
$p^n_{\epsilon}$ is continuous at $(x,y)$ and $p^n_{\epsilon}(x,y)>0$.
\end{proposition}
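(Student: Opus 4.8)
The plan is to obtain an explicit formula for $p^n_\epsilon$, to exhibit one term of it that is continuous and strictly positive at $(x,y)$, and to use the interior point $z$ to arrange that the underlying connecting orbit stays away from $\partial M$. Iterating the computation behind the Proposition of Section~\ref{sec:markov}: describe a trajectory started at $x=(s_0,\theta_0)$ by its perturbed outgoing angles $u_1,\dots,u_n$, so that after $k$ steps the state is $(\sigma_k,u_k)$ with $\sigma_k$ and the reflected angle $\phi_k$ depending on $u_1,\dots,u_{k-1}$; the state after $n$ steps is $(\sigma_n,u_n)$. Imposing that it equal $y=(s',\theta')$ forces $u_n=\theta'$ and the single scalar equation $\sigma_n(u_1,\dots,u_{n-1})=s'$; since $\partial\sigma_n/\partial u_{n-1}=t/\sin\phi_n\ne 0$ whenever $\phi_n\in(0,\pi)$, Lemma~\ref{imageb} lets the implicit function theorem solve it for $u_{n-1}$. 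Performing this change of variables gives
\begin{equation}
\nonumber
p^n_\epsilon(x,y)=\sum_{\beta}\int \Big[\prod_{k=1}^{n}\frac{1}{2\epsilon}\,\I\{u_k\in W_k\}\Big]\,\Big|\frac{\partial u_{n-1}}{\partial\sigma_n}\Big|\;\dd u_1\cdots\dd u_{n-2},
\end{equation}
where $W_k=[\phi_k^\epsilon-\epsilon,\phi_k^\epsilon+\epsilon]$, the value $u_{n-1}$ inside the integrand is the one solved for, $u_n=\theta'$, and $\beta$ runs over the finitely many branches (continuous local inverses) of $\sigma_n=s'$ in $u_{n-1}\in W_{n-1}$, each carrying its own integration domain. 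By Lemma~\ref{imageb} the factor $|\partial u_{n-1}/\partial\sigma_n|=\sin\phi_n/t$ equals $(\partial s_1/\partial\theta)^{-1}$ at the relevant point, hence extends continuously up to $\partial M$ and is bounded, while $\partial s_1/\partial\theta$ is bounded away from $0$.

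I claim that both conclusions are immediate provided the connecting orbit $\xi_0=x,\xi_1,\dots,\xi_n=y$ can be taken with all of $\xi_1,\dots,\xi_{n-1}$ in $\mathrm{int}\,M$. (Such an orbit may be assumed to pass through $z$ at time $l$, by concatenating the orbits realizing $z\in\widehat T_\epsilon^{l}(x)$ and $y\in\widehat T_\epsilon^{n-l}(z)$.) Along such an orbit the canonical parameter $u^\ast=(\theta(\xi_1),\dots,\theta(\xi_{n-2}))$ lies in the interior of the integration domain of one branch $\beta^\ast$, with all indicators $\I\{u_k\in W_k\}$ equal to $1$ on a neighbourhood of it: the $\phi_k$ are $C^1$ functions of $(u_1,\dots,u_{k-1})$ near $u^\ast$ because $T$ is $C^1$ near each interior point $\xi_1,\dots,\xi_{n-1}$, and the strict inequalities $|\theta(T\xi_k)-\theta_{k+1}|<\epsilon$ provide the slack. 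Moreover $\phi_n=\theta(T\xi_{n-1})\in(0,\pi)$, so by Lemma~\ref{imageb} the Jacobian factor is continuous and strictly positive there. Hence $\beta^\ast$ contributes a strictly positive amount to the sum and $p^n_\epsilon(x,y)>0$; and continuity of $p^n_\epsilon$ at $(x,y)$ follows by dominated convergence, since the branch integrands are uniformly bounded (by $(2\epsilon)^{-n}\sup|\partial u_{n-1}/\partial\sigma_n|<\infty$) and, by Lemma~\ref{imageb} together with the implicit function theorem, depend continuously on $(x,y)$ over integration domains whose boundaries move continuously.

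It remains to produce such a connecting orbit, and this is the only point where the hypothesis that some $\xi_l=z$ lies in $\mathrm{int}\,M$ is genuinely used. I would eliminate the intermediate points on $\partial M$ one by one, working from $\xi_1$ towards $\xi_{n-1}$: a point $\xi_j=(s_j,0)$ is replaced by a nearby interior point $(\,\cdot\,,a)$ with $a>0$ small, and the resulting $O(a)$ discrepancy in the position coordinate is cancelled by an $O(a)$ adjustment of the outgoing angle at a neighbouring interior point — at $\xi_{j-1}$ when $j>1$, and at $z$ itself when $j=1$, so that the frozen point $\xi_0=x$ is never touched. Since $z$ is interior it is never removed, so after finitely many steps all of $\xi_1,\dots,\xi_{n-1}$ lie in $\mathrm{int}\,M$; each individual move is legitimate because $T$ is a $C^1$ diffeomorphism of $\mathrm{int}\,M$, the corrections are arbitrarily small, and the strict inequalities leave room. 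I expect the main difficulty to be exactly this rerouting near $\partial M$: there $T$ is not differentiable and the branch combinatorics of the density formula can change, so one must verify carefully that the accumulated position error can always be routed to an available interior anchor and absorbed there, creating no fresh contact with $\partial M$.
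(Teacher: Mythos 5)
Your route is genuinely different from the paper's. The paper never writes an $n$-fold formula for $p^n_\epsilon$: it uses the semigroup decomposition through the interior waypoint $z$ (Chapman--Kolmogorov, $\widehat{T}_\epsilon^n(x)=\bigcup_{w}\widehat{T}_\epsilon^{n-l}(w)$ over $w\in\widehat{T}_\epsilon^{l}(x)$) and reduces everything to the two-step statement that $p^2_\epsilon(x,\cdot)$ is continuous and positive at interior points of $\widehat{T}_\epsilon^2(x)$, which is exactly the computation in the Proposition of Section~\ref{sec:markov} plus Lemma~\ref{imageb}. The point of the hypothesis is precisely that \emph{one} interior waypoint suffices: one integrates $p^{l}_\epsilon(x,w)\,p^{n-l}_\epsilon(w,y)$ over a small neighbourhood of $z$, where both factors are bounded below. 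Your first two paragraphs (the explicit branch formula, the positivity of the branch $\beta^\ast$, continuity by dominated convergence) are a more explicit and in places more careful version of the same local analysis, and are fine \emph{granting} an all-interior connecting orbit.

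The genuine gap is the last paragraph, and it is not a technicality you can wave at: your argument needs a connecting orbit with \emph{all} of $\xi_1,\dots,\xi_{n-1}$ interior, which is strictly stronger than what the hypothesis gives (one interior point at one intermediate time), and the rerouting you sketch does not close this. Concretely: near $\theta=0$ only one sign of angular perturbation is available, so lifting a boundary contact $\xi_j=(s_j,0)$ to $(s_j,a)$, $a>0$, displaces every subsequent collision point in a \emph{fixed} direction along $\Gamma$ (by about $2a/\kappa$, cf.\ Lemma~\ref{imageb}); this one-sided error can only be cancelled at an interior anchor, where $\theta$ can be perturbed in both directions. If the boundary contacts occur at times $j>l$, there need not be any interior anchor between $\xi_j$ and the frozen endpoint $y$, and absorbing the error \emph{upstream} (at $z$ or at $\xi_{j-1}$) requires $\partial s_n/\partial\theta_{\mathrm{anchor}}\neq 0$ across several iterates of $T$; that derivative is a product of $D_xT$ entries which can vanish (focusing/conjugate points), so it is not guaranteed by Lemma~\ref{imageb}, which controls only the one-step derivative. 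You flag this difficulty yourself, but without resolving it the proof does not go through; the paper's decomposition through $z$ is designed precisely to avoid ever needing an all-interior orbit.
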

\begin{proof}
From the above definition we have that
$\widehat{T}_\epsilon^n(x)= \bigcup_{z\in \widehat{T}_\epsilon^{n-1}(x)}\widehat{T}_\epsilon(z)$, and if $y\in \widehat{T}_\epsilon^2(x)$ 
and $y$ does not belong to the boundary of $M$, then $y$ belongs to the interior of the support of $p^2_{\epsilon}(x, .)$ and $p^2_{\epsilon}$ is continuous at $(x,y)$.
\end{proof}

\begin{proposition}
\label{previous}
Suppose that $D$ is convex with $C^1$ boundary given by the union a finite number of $C^2$ arcs and line segments.
For every $\epsilon > 0$, there exists $N \in\mathbb{N}$ such that, for all $x,y\in M$, there exists $\xi \in \mathcal{O}_{\epsilon,N}$, such that $\xi_0=x$ and $\xi_N=y$.
\end{proposition}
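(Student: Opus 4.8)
The plan is to show that from any starting point $x\in M$ we can, using only $\epsilon$-perturbations at each bounce, steer the orbit into the ``bulk'' cylinder $M_{\epsilon/2}$ within a bounded number of steps, then move freely within a suitable bulk region to approximate any target angle, and finally descend toward the target point $y$ (which may itself be near $\partial M$). The three ingredients are: (i) a ``climb'' step that increases the angle away from the degenerate boundary; (ii) a ``transfer'' step inside a bulk cylinder $M_a$ that changes the arc-length coordinate to anything we like while keeping $\theta$ under control; (iii) a ``descent'' step that is just the time-reversal of (i), using the involution $I$. Because $N$ must be uniform in $(x,y)$, each of these must be accomplished in a number of steps bounded independently of the points, which is where the work lies.

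\textbf{Step 1 (escaping the boundary).} Fix a small $a\in(0,\epsilon)$. Starting from any $x=(s,\theta)$ with $\theta<a$, I claim one can, in a single perturbed step, reach a point with angle in $[\,\theta,\theta+\tfrac{\epsilon}{2}\,]$ — simply add a perturbation close to $+\epsilon$ — and in particular after finitely many such steps reach angle $\geq a$; the bound on the number of steps is $\lceil 2a/\epsilon\rceil$ or so. One must check this is compatible with the constraint $\theta_{k+1}\le\pi$ and that the deterministic image $\theta(T(\xi_k))$ does not drop too fast; strict convexity and the twist property keep $\theta(Tx)$ comparable to $\theta(x)$ near the boundary (this is exactly the content behind Lemma~\ref{imageb} and Lemma~\ref{c}). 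Symmetrically, if $\theta>\pi-a$ we push the angle down. So after at most $N_1$ steps, uniformly, we land in $M_a$.

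\textbf{Step 2 (mixing inside the bulk).} Once in $M_a$, I want to reach a point with arbitrary arc-length coordinate. Here the deterministic map $T$ restricted to a bulk cylinder $M_b$ (for suitable $0<b<a$) is a genuine diffeomorphism with $\partial s_1/\partial\theta$ bounded away from $0$ and $\infty$ (Lemma~\ref{imageb} plus compactness), so the images $\widehat T_\epsilon^{\,m}(z)$ of a point $z\in M_a$ contain, already for $m$ bounded, a set whose $s$-projection is all of $[0,|\Gamma|)$ — each perturbed step lets $s$ vary over an interval of length bounded below (the image of a vertical segment of length $2\epsilon$ under a twist map has $s$-extent bounded below on $M_b$), and after $\lceil |\Gamma| / (\text{that length})\rceil$ steps the whole circle of $s$-values is covered, while we can always choose perturbations keeping $\theta\in[b,\pi-b]$. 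This gives a uniform $N_2$ after which we can be at any $s$ with angle, say, exactly some fixed $\theta_\star\in(b,\pi-b)$, or indeed at any prescribed interior angle.

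\textbf{Step 3 (reaching a prescribed $y$ near the boundary).} For the target $y=(s_y,\theta_y)$ with $\theta_y$ possibly small, run Steps 1--2 backwards: by the involution $T^{-k}\circ I = I\circ T^{k}$, an $\epsilon$-perturbed orbit ending at $y$ corresponds to one starting at $I(y)$ and escaping the boundary, which is covered by the same argument. Concatenating: Step~1 ($\le N_1$ steps) to enter $M_a$; Step~2 ($\le N_2$ steps) to reach the arc-length appropriate for the reversed descent and an interior angle $\theta_\star$; then the reversed Step~1 ($\le N_1$ steps) to land exactly on $y$. Pad with harmless ``stay-in-the-bulk'' moves so that the total length is exactly a single $N=2N_1+N_2$ for all pairs, which one can do since within $M_b$ we can always realize an extra step without leaving a fixed compact set — or, more cleanly, observe $\widehat T_\epsilon^{\,n}(x)$ is nondecreasing in the sense needed and take $N$ as the common upper bound.

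\textbf{Main obstacle.} The delicate point is the \emph{uniformity} near the degenerate boundary, i.e.\ that Step~1 needs only boundedly many steps regardless of how small $\theta$ is — one must rule out the target angle collapsing faster than $\epsilon$ grows it. This is ultimately where strict convexity with $C^2$ boundary (allowing isolated null-curvature points, but nothing worse, as in Mather's example) is essential: the estimates from Lemmas~\ref{imageb} and~\ref{c}, giving $\theta(Tx)\to 0$ at a controlled rate and $\partial s_1/\partial\theta$ bounded away from $0$, are precisely what make the number of required climbing steps bounded. The rest — covering the $s$-circle in the bulk, time-reversal via the involution, and padding to a common length — is routine once that uniform control is in hand.
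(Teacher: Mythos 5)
Your Steps 1 and 3 are essentially sound (and Step 1 is easier than you make it: since the perturbation has size $\epsilon$ and you may take the bulk cylinder $M_a$ with $a<\epsilon$, one or two perturbed steps already put you in the bulk, with no iterated ``climbing'' needed). The genuine gap is in Step 2, and it is exactly the hard part of the proposition. You assert that the $s$-projection of $\widehat{T}_\epsilon^{\,m}(z)$ grows by a fixed amount $L$ at each step, so that $\lceil|\Gamma|/L\rceil$ steps cover the circle. But the perturbation acts only on $\theta$: one step applied to a set $A$ yields the vertical $\epsilon$-fattening of $T(A)$, whose $s$-projection is just $s(T(A))$. The twist property (Lemma~\ref{imageb}) guarantees that the image of each vertical fiber of $A$ has $s$-extent bounded below, hence the projection of $\widehat{T}_\epsilon^{\,m}(z)$ always \emph{contains} an interval of length $\geq c\epsilon$; it does not guarantee that this projection is larger than that of $\widehat{T}_\epsilon^{\,m-1}(z)$. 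An area-preserving map can fold a distorted rectangle back onto a thin vertical strip (stretching in $\theta$ while contracting in $s$), and near elliptic periodic orbits or invariant curves of the unperturbed billiard there is no a priori uniform linear spreading rate. Without such a rate you get no uniform $N$, which is the entire content of the statement. The same issue undermines your padding step: $\widehat{T}_\epsilon^{\,n}(x)$ is not nondecreasing in $n$ in any useful sense, so ``inserting a harmless extra move'' itself requires an argument.

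The paper circumvents this by a different mechanism. For a small box $U\subseteq M_{c_1}$ it invokes the Poincar\'e recurrence theorem with the quantitative bound $n_U\leq\nu(U)^{-1}$ to produce a deterministic orbit returning to $U$ in uniformly bounded time, and uses the fact that $\widehat{T}_\epsilon^{\,2}(w)\supseteq B_{2\delta}(T^2w)$ to attach the two endpoints of that orbit to arbitrary points of $U$; synchronization of the (boundedly many) possible return times is achieved by passing to their least common multiple, and arbitrary pairs of points are then joined by chaining adjacent boxes of a fixed grid. If you wish to salvage your route, the natural substitute for ``projection grows by $L$ per step'' is a measure-growth argument, $\nu(\widehat{T}_\epsilon^{\,n+1}(z))\geq\nu(T(\widehat{T}_\epsilon^{\,n}(z)))=\nu(\widehat{T}_\epsilon^{\,n}(z))$ with strict growth unless the set is a union of full vertical fibers, but making that growth quantitative and uniform is essentially as hard as the recurrence argument the paper actually uses.
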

\begin{proof}
We split the proof in two steps.
First we show that it is possible to move between points in a given small neighborhood.
Finally we use this fact to cover the whole phase space.

\emph{Step 1.}
By definition, if $\theta_1 = \theta (Tx)\in [\epsilon, \pi - \epsilon]$,  or equivalently $Tx \in M_\epsilon$, then $\widehat{T}_\epsilon x = \{s_1\}\times(\theta_1 - \epsilon, \theta_1 + \epsilon)$.
In any case 
$\widehat{T}_\epsilon x = \{s_1\} \times (\theta_1 - \epsilon, \theta_1 + \epsilon)\cap M$, and $\widehat{T}_\epsilon^2 x$ is a distorted rectangle.
If $x$ does not belong to the boundary of $M$, then $T^2x$ lies 
in the interior of $\widehat{T}_\epsilon^2 x$. Now, take $c_2 (\epsilon) > 0$, fixed by the 
modulus of continuity of $T$ as in Lemma~\ref{c}. If $\delta < c_2$ is sufficiently small, then
for all $x$ in $M_{c_2}$, both $T^2(B_{2\delta}(x))$ and $B_{2\delta}(T^2x)$ are contained 
in $\widehat{T}_\epsilon^2 x$.

Consider a set $U\subset M_{c_1}$ with measure $\nu(U)>0$ and with diameter smaller
than $\delta$, and let $x_1$ be a point in $U$. As a consequence of the Poincar\'e
Recurrence Theorem and Birkhoff-Khinchin Ergodic Theorem, there exists a point $z$
in $U$ and $n_{_U}\le (\nu(U))^{-1}$ with $T^{n_{_{U}}}(z)$ in $U$.

By choice of $\delta$, we have that $z_2=T^2(z)\in \widehat{T}_\epsilon^2(x_1)$.
From this we have that $T^{n_U-4}(z_2)=T^{n_U-2}(z)$ belongs to $\widehat{T}_\epsilon^{n_U-2}(x_1)$.
Note that, by the choice of $c_2$, since $T^{n_U}(z)$ belongs to $M_{c_1}$, then
$T^{n_U-2}z$
belongs to $M_{c_2}$ and so, again by the choice of $\delta$, the ball
of radius $2\delta$ and center $T^2T^{n_U-2}(z)$ is contained $\widehat{T}_\epsilon^2(T^{n_U-2}z)$
and so $U\subset \widehat{T}_\epsilon^2(T^{n_U-2}z)\subset \widehat{T}_\epsilon^{n_U}x_1$. Therefore our dynamics moves
any point of $U$ to any other point in $U$ by the step $n_U \le (\nu(U))^{-1}$.

\emph{Step 2.}
We partition the cylinder $M_{c_1}$ into $k$ rectangles $R_1,\dots, R_k$ based 
on a rectangular grid of size less then $\delta/2$, and consider the collection $Q_1,\dots,Q_l$ of
rectangles of diameter less than $\delta$, made of two adjacent rectangles $R_i, R_j$.

Let $N_0$ be such that $N_0^{-1}$ is smaller that the minimum of
$\nu({Q_j}),\,1\le j\le l$. Then $N_0$ only depends on $\epsilon$, and for each $Q_j$,
there exists $n_{Q_j}<N_0$ such that, for any two points $x, y$ in $Q_j$, $y$ belongs
to $\widehat{T}_\epsilon^{n_{Q_j}}(x)$. Let $N_1$ be the least common multiple of $\{1,\dots, N_0-1\}$. 
Then, repeatedly applying the same reasoning in each rectangle, any two points in
the same rectangle can be joined by a random trajectory at step $N_1$. More precisely,
$\widehat{T}_\epsilon^{N_1}x$ contains $Q_j$ for each $x \in Q_j.$

Consider two points $x_0, y$ in $M_{c_1}$. There exists a sequence of adjacent
rectangles $R_0, R_1, \dots, R_m,\, m< k$, such that $x_0 \in R_0$ and $y \in R_m$. 
Choose $x_i \in R_i,\, 1\le i\le m-1$ and let $x_m=y$. By construction, for any $0\le i\le m-1$
there exists $j_i$ such that both $x_i$ and $x_{i+1}$ belong to $Q_{j_i}$. Thus
$x_{i+1} \in \widehat{T}_\epsilon^{N_1}(x_i)$. By induction, $x_m \in \widehat{T}_\epsilon^{m N_1}(x_0)$.
On the other hand, as a consequence of the recurrence of $R_m$ by $\widehat{T}_\epsilon$,
$x_m \in \widehat{T}_\epsilon^{N_1}(x_m)$ and so we have that $x_m \in \widehat{T}_\epsilon^{n N_1}(x_0)$
for any $n>m$.
Since $x_0$ and $y$ were arbitrary and $m<k$, for any $x$ in $M_{c_1}$, $\widehat{T}_\epsilon^{kN_1}(x)$ contains $M_{c_1}$.

For any $x$ in $M, \, \widehat{T}_\epsilon x$ intersects $M_{\epsilon}\subset M_{c_1}$, and so $\widehat{T}_\epsilon^{kN_1+1}x$ contains $M_{c_1}$. Now observe that if $\theta(Tx)$ is smaller than $\epsilon$ or greater than $\pi - \epsilon$, then $\widehat{T}_\epsilon x$ contains the segment $(s(Tx) \times [0, \epsilon])$ or the segment $(s(Tx) \times [\pi -\epsilon, \pi])$.
Then, since $M_{\epsilon}\subset T(M_{c_1}),\, \widehat{T}_\epsilon(M_{c_1})= M$ we have that $\widehat{T}_\epsilon^{kN_1+2}x=M$ for any $x \in M$.
\end{proof}

\begin{proof}
[Proof of Proposition~\ref{pr:principal}]
By Proposition~\ref{previous}, 
there exists $N\ge 2$ such that, for all $x,y \in M,\, y\in \widehat{T}_\epsilon^N(x)$. In particular, 
by Proposition~\ref{pr:aux1}, $p^N_{\epsilon}$ is continuous and strictly positive in $(x,y)$. 
The result follows by compactness.
\end{proof}

\section{A quantitative estimate in D\"oblin's condition}\label{sec:quantitative}

In this section we present, in a more restrictive setting, an upper bound depending on $\varepsilon$ for the minimal value $N$ in Proposition \ref{pr:principal}. In what follows, we assume that $D$ is convex with $C^2$ boundary, and that the curvature of $\Gamma$ is nowhere null. In this setting, $T$ is a diffeomorphism of the  closed annulus $M$. Let $L=\max\{\Vert DT\vert, \Vert DT^{-1}\Vert, 2^{\frac{1}{5}}\}$.  Since the boundaries of $M$ are invariant by $T$, it follows that for any $a\in [0, \frac{\pi}{2L}]$, $M_{La}\subset M_{a}\subset M_{\frac{a}{L}}$. In particular, we will set $c_1= \frac{\varepsilon}{L}$ and $c_2=\frac{\varepsilon}{L^3}$, and it is immediate that  both  satisfy the conditions of  Lemma \ref{c}. 

Since $\frac{\partial s_1}{\partial \theta}$ is bounded away from $0$ and infinity, there exists some constant $d_1\ge1$ such that the image of any vertical line ($s = $ constant)  has slope in the interval $[\frac{1}{d_1}, d_1]$.

\begin{lemma}\label{lm:deltaestimate}
If $\delta \le \frac{c_2}{d_1L^2}\le\frac{\varepsilon}{d_1L^4}$, then
for all $x$ in $M_{c_2}$, both $T^2(B_{2\delta}(x))$ and $B_{2\delta}(T^2x)$ are contained 
in $\widehat{T}_\epsilon^2 x$.
\end{lemma}

\begin{proof}
If $x\in M_{c_2}$, then $Tx\in M_{c_2/L}$ which implies that the segment $\{s_1\}\times (\theta_1- c_2/L, \theta_1+ c_2/L)$ is contained in $\hat T_{\epsilon}(x)$. The image of the segment  is a curve with slope at most $d_1$ and length at least $(c_2/L)/L$ passing through $T^2x=(s_2, \theta_2)$. Therefore, for all $\theta$ such that $\vert\theta-\theta_2\vert<\frac{c_2}{d_1L^2}$, there exists some $s(\theta)$ with $\vert s(\theta)-s_2\vert \le \frac{c_2}{L^2}=\frac{\epsilon}{L^5}\le\frac{\epsilon}{2}$ such that $(s(\theta),\theta)$ lies in the image by $T$ of $\hat T_{\epsilon}x$, which implies that the rectangle $(s_2-\delta, s_2+\delta)\times (\theta_2- \epsilon_2, \theta_2+\epsilon_2) \cap M$ is contained in $\hat T^2_{\epsilon}x$.     \end{proof}

\begin{lemma}\label{lm:volumebola}
There exists some constant $d_2$ such that, for every $x\in M_{c_2}$ and any $r\le c_2$, $\nu(B_r(x))\cap M_{c_2}\ge d_2 r^2$
\end{lemma}

\begin{proof}
This follows from the fact that $\nu$ is absolutely continuous with respect to Lebesgue and its density is uniformly bounded away from $0$ on $M_{c_2}$. 
\end{proof}

\begin{proposition}\label{pr:volumecresce}
If $x\in M_{c_1}$ and $n\ge 1$ then either $M_{c_2}\subset \hat T_{\epsilon}^{2n}(x)$ or $\nu(\hat T_{\epsilon}^{2n}(x))\ge \frac{n d_2 \delta^2}{4}$. 
\end{proposition}

\begin{proof}
The proof is by induction. For $n=1$, since $x\in M_{c_1}$,  we have that $T^2x\in M_{c_2}$, and  by Lemma \ref{lm:deltaestimate} $B_{\delta}(T^2x) \subset \hat T^2_{\epsilon}x$, which implies, by Lemma \ref{lm:volumebola}, that $\nu(\hat T^2_{\epsilon}x)>d_2 \delta^2$.

Now assume the result true for $n=k-1$. If $M_{c_2}\subset \hat T_{\epsilon}^{2(k-1)}(x)$, then $\hat T_\epsilon(\hat T_{\epsilon}^{2(k-1)}x)=M$ and so for all $n>2(k-1),\, \hat T_{\epsilon}^{n}x=M$. If not, then $\nu(\hat T_{\epsilon}^{2(k-1)}(x))\ge \frac{(k-1) d_2 \delta^2}{4}$ and, as $T$ preserves $\nu$, then $\nu(T^2(\hat T_{\epsilon}^{2(k-1)}(x)))=\nu(\hat T_{\epsilon}^{2(k-1)}(x))$. 

Note that, for all $z\in T^2(\hat T_{\epsilon}^{2(k-1)}(x)) \cap M_{c_2}$, $B_\delta(z)\subset \hat T_{\epsilon}^2z\subset \hat T^{2k}_{\epsilon}x$.  Also, since for all $y\in M, \hat T_{\varepsilon} y$ intersects $M_{\epsilon}$, it  follows that for all $y$ and all positive $j$, $\hat T^{j}_{\epsilon}y$ intersects $M_{\epsilon}$, and therefore $T^2(\hat T_{\epsilon}^{2(k-1)}(x)) \cap M_{c_2}$ is not empty.  Let 
$$K= T^2(\hat T_{\epsilon}^{2(k-1)}(x)) \cup \left(\bigcup_{ z\in T^2(\hat T_{\epsilon}^{2(k-1)}(x)) \cap M_{c_2}} B_{\delta}(z) \right)$$
and note that $K\subset \hat T_{\epsilon}^{2k}(x).$

If $M_{c_2}\subset K$, we are done. Otherwise, there exists some $y_1\in M_{c_2}\setminus K$, which implies that $B_{\delta}(y_1)\cap M_{c_2}$ is disjoint from $T^2(\hat T_{\epsilon}^{2(k-1)}(x)) \cap M_{c_2}$.  Since the later set is not empty, one can find a point $y_2$ in $M_{c_2}$ such that the distance of $y_2$ to $T^2(\hat T_{\epsilon}^{2(k-1)}(x)) \cap M_{c_2}$ is precisely $\delta/2$. But this implies that $B_{\delta/2}(y_2)\cap M_2$ is disjoint from $T^2(\hat T_{\epsilon}^{2(k-1)}(x))$, but a subset of $K$. Therefore
$$\nu(K)\ge \nu(T^2(\hat T_{\epsilon}^{2(k-1)}(x)))+\nu(B_{\delta/2}(y_2)\cap M_2)\ge \frac{(k-1) d_2 \delta^2}{4}+ \frac{d_2 \delta^2}{4},$$ where the last inequality follows from Lemma \ref{lm:volumebola}.
\end{proof}

\begin{proof}
[Proof of Theorem~\ref{thm:epsilonsquared}]
It is straightforward that, if $x \in M$, then there exists some $z\in \hat T_{\epsilon}x\cap M_{\epsilon}$.  By Proposition \ref{pr:volumecresce}, since $\nu(M)=1$, if $\frac{n_1 d_2 \delta^2}{4}= \frac{d_2 d_1^2 \epsilon^2}{4L^8}>1$, then $\hat T_{\epsilon}^{2 n_1} z$ contains $M_{c_2}$   and therefore $\hat T_{\epsilon}^{2 n_1+1} z$ contains $M$ so $\hat T_{\epsilon}^{2 n_1+2}x $ contains $M$.  The result follows with $k= \frac{4 L^8}{d_1 d_2^2}$.
 \end{proof}

\section*{Acknowledgements}

R.M. would like to thank Ya.\ G. Sinai for suggesting this problem. R.M. also would like to thank S\^{o}nia Pinto de 
Carvalho (UFMG, Belo Horizonte), Gianluigi Del Magno (UTP, Lisbon), Grupo de Investigaci\'{o}n ``Sistemas Din\'{a}micos" (CSIC, UdelaR, Uruguay), and IMPA. F.T. was partially supported by CNPq grant 304474/2011-8 and FAPESP grant 2011/16265-8. M.E.V. was partially financed by CNPq grant 304217/2011-5 and FAPERJ
grant E-26/102.338/2013.

\bibliographystyle{bib/rollaalphasiam}
\bibliography{bib/leo}

\begin{thebibliography}{CPSV09}

\bibitem[CCF13]{chumley-cook-feres-13}
{\sc T.~Chumley, S.~Cook, and R.~Feres}, {\em From billiards to
  thermodynamics}, Comput. Math. Appl., 65 (2013), pp.~1596--1613.

\bibitem[CF12]{cook-feres-12}
{\sc S.~Cook and R.~Feres}, {\em Random billiards with wall temperature and
  associated {M}arkov chains}, Nonlinearity, 25 (2012), pp.~2503--2541.

\bibitem[CM06]{chernov-markarian-06}
{\sc N.~Chernov and R.~Markarian}, {\em Chaotic billiards}, vol.~127 of
  Mathematical Surveys and Monographs, American Mathematical Society,
  Providence, RI, 2006.

\bibitem[CPSV09]{comets-popov-schutz-vachkovskaia-09}
{\sc F.~Comets, S.~Popov, G.~M. Sch{\"u}tz, and M.~Vachkovskaia}, {\em
  Billiards in a general domain with random reflections}, Arch. Ration. Mech.
  Anal., 191 (2009), pp.~497--537.

\bibitem[Dou82]{douady-82}
{\sc R.~Douady}, {\em Applications du th{\'e}or{\`e}me des tores invariants},
  master's thesis, Univ. Paris VII, 1982.
\newblock Th{\`e}se de 3{\`e}me Cycle.

\bibitem[Eva01]{evans-01}
{\sc S.~N. Evans}, {\em Stochastic billiards on general tables}, Ann. Appl.
  Probab., 11 (2001), pp.~419--437.

\bibitem[FZ10]{feres-zhang-10}
{\sc R.~Feres and H.-K. Zhang}, {\em The spectrum of the billiard laplacian of
  a family of random billiards}, J. Stat. Phys., 141 (2010), pp.~1039--1054.

\bibitem[FZ12]{feres-zhang-12}
\leavevmode\vrule height 2pt depth -1.6pt width 23pt, {\em Spectral gap for a
  class of random billiards}, Comm. Math. Phys., 313 (2012), pp.~479--515.

\bibitem[Hal77]{halpern-77}
{\sc B.~Halpern}, {\em Strange billiard tables}, Trans. Amer. Math. Soc., 232
  (1977), pp.~297--305.

\bibitem[KH95]{katok-hasselblatt-95}
{\sc A.~Katok and B.~Hasselblatt}, {\em Introduction to the Modern Theory of
  Dynamical Systems}, Cambridge University Press, New York, 1995.

\bibitem[KY13]{khanin-yarmola-13}
{\sc K.~Khanin and T.~Yarmola}, {\em Ergodic properties of random billiards
  driven by thermostats}, Comm. Math. Phys., 320 (2013), pp.~121--147.

\bibitem[Laz73]{lazutkin-73}
{\sc V.~F. Lazutkin}, {\em Existence of caustics for the billiard problem in a
  convex domain}, Izv. Akad. Nauk SSSR Ser. Mat., 37 (1973), pp.~186--216.

\bibitem[Mat82]{mather-82}
{\sc J.~N. Mather}, {\em Glancing billiards}, Ergodic Theory Dynam. Systems, 2
  (1982), pp.~397--403.

\bibitem[MT09]{meyn-tweedie-09}
{\sc S.~Meyn and R.~L. Tweedie}, {\em Markov chains and stochastic stability},
  Cambridge University Press, Cambridge, 2~ed., 2009.
\newblock With a prologue by Peter W. Glynn.

\bibitem[Yar13]{yarmola-13}
{\sc T.~Yarmola}, {\em Sub-exponential mixing of random billiards driven by
  thermostats}, Nonlinearity, 26 (2013), pp.~1825--1837.

\end{thebibliography}

\end{document}